\newcommand \datum {May 4, 2022}
\numberwithin{equation}{section}
\theoremstyle{plain}
 \newtheorem{theorem}{Theorem}[section]
 \newtheorem{lemma}[theorem]{Lemma}
 \newtheorem{remark}[theorem]{Remark}
\theoremstyle{definition}
\theoremstyle{remark}
 \newtheorem{case}{Case}
\newcommand \CDA {\textup{CDA}}
\newcommand \LAT {\textup{LAT}}
\newcommand \CE [1] {\textup{CE}(#1)}
\newcommand \En [1] {\textup{En}(#1)}
\newcommand \At [1] {\textup{At}(#1)}
\newcommand \nb [2] {|#2/#1|} 
\newcommand \Con [1]   {\textup{Con}(#1)}
\newcommand \Equ [1]   {\textup{Equ}(#1)}
\newcommand \vonal {\noalign{\hrule}}
\newcommand \heq {h_{\textup{eq}}}
\newcommand \gmx {g_{\textup{mx}}}
\newcommand \gsb {g_{\textup{sb}}}
\newcommand \gpn {g_{\textup{pn}}}
\newcommand \gplu {+_{\textup{glu}}}
\newcommand \CA {\textup{CA}}
\newcommand \CB {\textup{CB}}
\newcommand \falf {f_{\alpha}}
\newcommand \fabe {f_{\beta}}
\newcommand \faf {f_{a}}
\newcommand \botof[1] {\Delta{\kern-1pt}_{#1}}
\newcommand \topof[1] {\nabla{\kern-2pt}_{#1}}
\newcommand \equ {\textup{equ}}
\newcommand \con {\textup{con}}
\newcommand \ideal [1] {\mathord{\downarrow}#1}
\newcommand \filter [1] {\mathord{\uparrow}#1}
\newcommand \downset [1]{\mathcal S_{\textup{dn}}(#1)}
\newcommand \peup {\twoheadrightarrow_{\textup{up}}}
\newcommand \pedn {\twoheadrightarrow_{\textup{dn}}}
\newcommand \pers {\twoheadrightarrow}
\newcommand \proj {\mathrel{\twoheadrightarrow^{\kern-1pt\ast}}}
\newcommand \eqflat {\mathrel{=\kern-5pt=}}
\newcommand \Jir [1] {\textup J(#1)} 
\newcommand \tbf[1]  {\textbf{#1}} 
\newcommand \set [1]{\{#1\}}
\newcommand\red[1]{{\textcolor{red}{#1}}}
\newcommand \blue [1] {{\color{blue}#1\color{black}}}
\newcommand \black [1] {{\color{black}#1\color{black}}}
\newcommand \magenta [1] {{\color{magenta}#1\color{black}}}
\begin{document}

\title{Lattices with lots of congruence energy} 

\author[G.\ Cz\'edli]{G\'abor Cz\'edli}
\email{czedli@math.u-szeged.hu}
\urladdr{http://www.math.u-szeged.hu/~czedli/}
\address{ Bolyai Institute, University of Szeged, Hungary}

\begin{abstract} 
In 1978, motivated by E. H\"uckel's work in quantum chemistry, I.~Gutman introduced the concept of the \emph{energy}  of a finite simple graph $G$  as the sum of the absolute values of the eigenvalues of the adjacency matrix of $G$.
At the time of writing, the MathSciNet search for "Title=(graph energy) AND  Review Text=(eigenvalue)" returns 351 publications, most of which going after Gutman's definition. 
A congruence $\alpha$ of a finite algebra $A$ turns $A$ into a simple graph: we connect $x\neq y\in A$   by an edge iff $(x,y)\in\alpha$; we let $\En\alpha$ be the energy of this graph. 
We introduce the \emph{congruence energy} $\CE A$ of $A$ by $\CE A:=\sum\{\En\alpha: \alpha\in\Con A\}$. 
Let $\LAT(n)$ and $\CDA(n)$ stand for the class of $n$-element lattices and that of $n$-element congruence distributive algebras of any type. For a class $\mathcal X$, let
$\CE{\mathcal X}:=\{\CE A: A\in \mathcal X\}$.
We prove the following.
(1) For $\alpha\in A$, $\En\alpha/2$ is  the height of $\alpha$ in the \emph{equivalence} lattice of $A$.  (2) The largest number and the second largest number in $\CE{\LAT(n)}$ are $(n-1)\cdot 2^{n-1}$ and, for $n\geq 4$, $(n-1)\cdot 2^{n-2}+2^{n-3}$; 
these numbers are only witnessed by chains and lattices with exactly one two-element antichain, respectively. (3) The largest number in $\CE{\CDA(n)}$ is  also $(n-1)\cdot 2^{n-1}$, and if $\CE A=(n-1)\cdot 2^{n-1}$ for an $A\in\CDA(n)$, then $\Con A$ is 
a boolean lattice with size $|\Con A|=2^{n-1}$.
\end{abstract}

\thanks{This research of the first author was supported by the National Research, Development and Innovation Fund of Hungary under funding scheme K 138892.}

\subjclass {Primary 06B10; secondary 05C50 and 08B10 }

\keywords{Congruence relation, lattice, congruence distributive algebra, energy of graphs, number of congruences}

\date{\datum
\hfill 
$\angle\kern-5.5pt\text{\textunderscore} 
\kern-2pt\text{\textunderscore} 
\kern-1.5pt\text{\textunderscore} 
\kern-1.0pt\text{\textunderscore} 
\kern-7.0pt\text{\raisebox{1.7pt}{\rotatebox{135}{$\smallsmile$}}}
\kern-5.1pt\text{\tbf{\raisebox{1.9pt}.}}
$
\raisebox{2pt}
{\tiny{\texttt{http://www.math.u-szeged.hu/\textasciitilde{}czedli/}}}\qquad
}

\maketitle

\section{Targeted readership}\label{sect:intro}
Most mathematicians are expected to read the \emph{results} of this paper easily. These 
result might motivate analogous investigation of some 
algebraic structures not mentioned here. To follow the \emph{proofs},  a little familiarity with lattice theory is assumed.

\section{Outline}\label{subs:outl}
Sections~\ref{subs:hist} and \ref{subsect:morehistory}  give some history and motivations. Section \ref{subsect:keyc} introduces the key concepts.
Section~\ref{sect:obs} translates these concepts from linear algebra to  lattice theory. Section~\ref{sect:mainresult} states the main result of the paper, Theorem~\ref{thmmain}. Section~\ref{sect:proof}, which comprises the majority of the paper, proves the main result.

\section{Motivations coming from quantum chemistry and graph theory}\label{subs:hist}
The research aiming at the energy of a graph goes back to H\"uckel~\cite{huckel},  which is a quantum chemical paper published more then nine decades ago. It would be difficult to present a short survey of how the research of the energy of an unsaturated conjugated hydrocarbon molecule lead to a concept on the border line between graph theory and linear algebra. Thus, if the reader is interested in these historical details, then he is referred to the introductory part of 
Majstorovi\'c,   Klobu\v car, and Gutman~\cite
{majst-at-al}. What is important for us is that Gutman's pioneering paper \cite{gutman} introduced the concept of the energy of a graph in 1978, and this 
concept has been studied in quite many publications since then.

The concept of energy can be extended to mathematical structures that are accompanied by graphs. This is exemplified by Pawar and Bhamre~\cite{pawarbhamre}. As opposed to Gutman~\cite
{majst-at-al}, Pawar and Bhamre~\cite{pawarbhamre} use non-simple graphs. Here we stick to simple graphs but we need a family of them.

\section{The concept we introduce}\label{subsect:keyc} 
A \emph{simple graph} is an undirected graph without loop edges and multiple edges. 
Let $v_1,\dots,v_n$ be a repetition-free list of all vertices of a finite simple graph $G$. (The order of the vertices in this list will turn out to be unimportant in \eqref{eq:EnGwelldef} later.)
The \emph{adjacency matrix} of $G$ is the 
$n$-by-$n$ matrix $B=(a_{ij})_{n\times n}$ with entries 0 and 1 according to the rule that $a_{ij}=1$ iff $v_i$ and $v_j$ are connected by an edge. 
The $n$-by-$n$ unit matrix is denoted by $I_n$; every diagonal entry of $I_n$ is 1 while any other entry of $I_n$ is 0.
Since $B$ is a symmetric matrix, its characteristic
polynomial is known to be the product of linear factors over $\mathbb R$, that is, $\det(xI_n-B)=\prod_{j:=1}^n(x-x_j)$ with real numbers (called eigenvalues) $x_1,\dots, x_n$. According to Gutman  \cite{gutman},
the energy of the graph $G$ in question is defined to be $\En G:=\sum_{j=1}^n|x_j|$. Note that
\begin{equation}\left.
\parbox{10.6cm}{if we change the order of elements in the list $v_1,\dots,v_n$, then $B$ turns into another matrix $B'$; however, $B$ and $B'$ are similar matrices with the same characteristic polynomial, whereby $\En G$ is well defined.}\,\,\right\}
\label{eq:EnGwelldef}
\end{equation}
Next, we start from a finite algebra $A=(A,F)$. A congruence $\alpha$ of $A$, in notation, $\alpha\in\Con A$, determines a graph $G_{A,\alpha}$ in quite a natural way: the vertices are the elements of $A$ while $a,b\in A$ are connected by an edge of $G$ iff $a\neq b$ but $(a,b)\in\alpha$. We define \emph{the energy} $\En\alpha$ \emph{of the congruence $\alpha$} by letting $\En\alpha:=\En{G_{A,\alpha}}$.

Note that $\En\alpha$ is meaningful for any equivalence relation $\alpha$ of $A$, in notation, $\alpha\in\Equ A$ since $\Equ A=\Con{A,\emptyset}$ (the case of no operation). In fact, $\En\alpha$ is meaningful for any symmetric relation $\alpha$ of $A$ but in this paper we restrict ourselves to congruence relations.
To explain the definition of $\En\alpha$ more directly  and for the sake of later reference, assume that $A=\set{a_1,\dots,a_n}$ is an $n$-element algebra and
$\alpha\in\Con A$. Let $\botof A:=\set{(x,x): x\in A}$ denote the smallest congruence of $A$.
Up to matrix similarity, the \emph{adjacency matrix} of $\alpha$ is 
\begin{equation}
M(\alpha)=(m_{i j})_{n\times n}\,\,\text{ where }\,\,
m_{i j}=
 \begin{cases}
  1,&\text{if }(a_i,a_j)\in \alpha\setminus\botof A,\cr
  0,&\text{ otherwise.}
 \end{cases}
\end{equation}
Then the characteristic polynomial of $M(\alpha)$ is  $\chi_{M(\alpha)}=\prod_{j:=1}^n(x-x_j)$. 
Keeping \eqref{eq:EnGwelldef} in mind, we define the 
\red{$\boxed{\text{
                   \black{\emph{energy} $\En\alpha$ of $\alpha$ by $\En\alpha:=\sum_{j=1}^n|x_j]$.
                         }
                  }
            }
     $
    }

If we take all congruences $\alpha$ of $A$ and form the sum of their  $\En\alpha$'s, then we obtain the \emph{congruence energy} $\CE A$ of our algebra. So the key definition in the paper is the following: for a finite algebra $A$, 
\begin{equation}\magenta{
\boxed{\blue{
\boxed{\red{
\boxed{\black{\text{the \emph{congruence energy} $\CE A$ of $A$ is $\,\,\CE A:=\sum_{\alpha\in\Con A} \En \alpha$.}}}}}}}}
\label{eq:EnergofA}
\end{equation}

\section{Motivations coming from algebra}\label{subsect:morehistory} 
A straightforward way to measure the complexity of the collection of congruences of a finite algebra $A$ is to take  $|\Con A|$; $\CE A$ offers another way. Figure \ref{fig:ex} shows that none of the inequalities  $\CE{A_1}<\CE{A_2}$ and $|\Con{A_1}|< |\Con{A_2}|$ implies the other one. Among the $n$-element algebras $A$, those with \emph{minimal}  $|\Con A|$  could be the involved \emph{the building stones} of other algebras, like finite simple groups. On the other hand, $n$-element algebras $A$ with \emph{maximal} or close to maximal  $|\Con A|$ are often 
\emph{nice buildings} with well-understood structures and nice properties; see, for example, Cz\'edli~\cite{czedli145,czedli148,czedli152} and Kulin and Mure\c san \cite{kulinmuresan}. In addition to Section \ref{subs:hist}, these ideas also motivate the present paper.

\section{Two easy remarks}\label{sect:obs}

For a finite algebra $A$ and  $\alpha\in \Con A$, the quotient algebra $A/\alpha$ consists of the $\alpha$-blocks, whereby $\nb\alpha A$ is the \emph{number of the blocks of }$\alpha$. We will denote by $\Equ A=(\Equ A,\subseteq)$ the \emph{equivalence lattice} of $A$; note that  $\Con A$ is a sublattice of $\Equ A$
containing the least equivalence  $\botof A=\set{(x,x): x\in A}$ and the largest equivalence $\topof A=A\times A$.  The covering relation understood in $\Equ A$ is denoted by $\prec_e$. For $\alpha\in \Con A$, the 
\emph{height} of $\alpha$ in $\Equ A$ will be denoted by $\heq (\alpha)$. In particular, $\heq(\botof A)=0$ and $\heq(\topof A)=|A|-1$. Using the semimodularity of $\Equ A$, see, for example,
 Gr\"atzer~\cite[Theorem 404]{gratzerLTFbook}, we obtain trivially that
\begin{equation}
\text{for $\alpha<\beta$ in $\Equ A$,   $\,\,\alpha\prec_e \beta$ if and only if $\nb\beta A=\nb\alpha A-1$.}
\label{eqtxt:pRnwq}
\end{equation}
It follows from \eqref{eqtxt:pRnwq} that
\begin{equation}
\text{if $|A|=n$ and $\alpha\in\Equ A$, then $\heq(\alpha)+\nb\alpha A = n$.}
\label{eq:heqnb}
\end{equation}

\begin{remark}\label{ob:dsnZdlsmnhs} 
For an $n$-element finite algebra $A$ and $\Theta\in\Con A$,
\begin{align}
\En \Theta&=2\cdot(n-\nb\Theta A)=2\cdot\heq(\Theta)\qquad \text{ and  }
\label{eq:shmpwWr}\\
\CE A&=2n\cdot |\Con A|-2\cdot\sum_{\Theta\in\Con A}\nb \Theta A = 2\cdot \sum_{\alpha\in\Con A} \heq(\Theta).
\label{eq:EnThdqm}
\end{align}
\end{remark}

\begin{proof}
Consider the following $k$-by-$k$ matrices:
\begin{equation*}
M_k:=\begin{pmatrix}
0&1&1& \dots &  1\cr
1&0&1& \dots &  1\cr
1&1&0& \dots &  1\cr
\vdots&\vdots&\vdots&\ddots&\vdots\cr
1&1&1& \dots &  0\cr
\end{pmatrix},
\quad
P_k:=\begin{pmatrix}
-1&-1& \dots & -1& 1\cr
1&0& \dots & 0& 1\cr
0&1& \dots & 0& 1\cr
\vdots&\vdots&\ddots&\vdots&\vdots\cr
0&0& \dots & 1& 1\cr
\end{pmatrix},
\end{equation*}
\begin{equation*}
Q_k:=\begin{pmatrix}
-1&k-1&-1& \dots &  -1\cr
-1&-1&k-1& \dots &  -1\cr
\vdots&\vdots&\vdots&\ddots&\vdots\cr
-1&-1&-1& \dots &  k-1\cr
1&1&1& \dots &  1\cr
\end{pmatrix},
\quad
H_k:=\begin{pmatrix}
-1&0& \dots & 0& 0\cr
0&-1& \dots & 0& 0\cr
\vdots&\vdots&\ddots&\vdots&\vdots\cr
0&0& \dots & -1& 0\cr
0&0& \dots & 0& k-1\cr
\end{pmatrix}.
\end{equation*}
Note that each of $P_k$, $Q_k$, and $H_k$ contains a $(k-1)$-by-$(k-1)$ submatrix in which the diagonal elements are all equal and so do the non-diagonal elements; these submatrices are the bottom left $(k-1)$-by-$(k-1)$ submatrix of $P_k$, the top right one of $Q_k$, and the top left one of $H_k$. An easy computation shows that 
$P_kQ_k=kI_k$, implying that $P_k^{-1}= k^{-1}Q$. Another computation yields that $P_kH_kQ_k=kM_k$, whereby $M_k= P_kH_k (k^{-1}Q_k)=  P_kH_k P_k^{-1}$. 
This shows that $M_k$ and $H_k$ are similar matrices with the same characteristic polynomial and eigenvalues.  Hence, the sum of the absolute values of the eigenvalues of $M_k$ is $2(k-1)$. 
Let $U_1,\dots,U_t$ be the $\Theta$-blocks where $t=\nb\Theta A$. For $i=1,\dots,t$, let $k_i:=|U_i|$.
List the elements of $A$ so that first we list the elements of $U_1$, then the elements of $U_2$, and so on. Then $M(\Theta)$ is the matrix we obtain by placing $M_{k_1}, \dots, M_{k_t}$ along the diagonal and putting zeros everywhere else. Then the system of the eigenvalues of $M(\Theta)$ is the union of the systems of the eigenvalues of the $M_{k_i}$, $i=1,\dots,t$. Thus, using that $k_1+\dots+k_t=|A|=n$, 
$\En\Theta = 2(k_1-1)+\dots+ 2(k_t-1)= 2n-2t=2(n-\nb\Theta A)$, as required. The rest of Remark \ref{ob:dsnZdlsmnhs}  is now trivial. 
\end{proof}

For a positive integer $n$, let $[n]:=\set{1,2,\dots,n}$, and let  $k\in[1]$. Recall that
$B(n):=|\Equ{[n]}|$ is the $n$-th \emph{Bell number}, $S_2(n,k):=|\set{\alpha\in\Equ{[n]}: \nb\alpha {[n]}=k}|$ is a \emph{Stirling number of the second kind}, and $
B_2(n):=\sum_{i=1}^n i\cdot S_2(n,i)$ is the $n$-th \emph{$2$-Bell number}.
They are frequently studied numbers; see the sequences A000110, A008277, and A005493 and  A138378 in Sloan's OEIS  \cite{sloan}. 

\begin{remark} For a positive integer $n$ and an $n$-element algebra $A$, we have that 
\begin{equation}
\CE A\leq 2n B(n)-2B_2(n).
\label{eq:smwTrzSg}
\end{equation}
In \eqref{eq:smwTrzSg}, equality holds if and only if $\Con A=\Equ A$. 
\end{remark}
The straightforward details of the proof are omitted.
Let $\eqref{eq:smwTrzSg}(n)$ stand for $2nB(n)-2B_2(n)$; the first ten values of  $\eqref{eq:smwTrzSg}(n)$ are given in the following table.
\[
\lower  0.8 cm
\vbox{\tabskip=0pt\offinterlineskip 
\halign{\strut#&\vrule#\tabskip=1pt plus 2pt&#\hfill& \vrule\vrule\vrule#&
\hfill#&\vrule#&
\hfill#&\vrule#&
\hfill#&\vrule#&
\hfill#&\vrule#&
\hfill#&\vrule\tabskip=0.1pt#&#\hfill\vrule\vrule\cr
\vonal\vonal\vonal\vonal
&&\hfill$n$&&$\,1$&&$\,2$&&$\,3$&&$\,4$&&$5$&\cr\vonal
&&$\eqref{eq:smwTrzSg}(n)$&&$0$&&$2$&&$10$&&$46$&&$218$&\cr\vonal\vonal\vonal\vonal
&&\hfill$n$&&$\,6$&&$7$&&$8$&&$9$&&$10$&\cr\vonal
&&$\eqref{eq:smwTrzSg}(n)$&&$1\,088$&&$5\,752$&&$32\,226$&&$190\,990$&&$1\,194\,310$&\cr\vonal\vonal\vonal\vonal
}} 
\]

\section{The main result}\label{sect:mainresult}
An algebra $A$ is \emph{congruence distributive} if the lattice $\Con A=(\Con A,\subseteq)$ is distributive. Lattices are congruence distributive. \emph{Chains} are lattices in which any two elements $x$ and $y$ are comparable, in notation, $x\nparallel y$. The $n$-element chain is denoted by $C_n$, and let $B_4$ be the 4-element boolean lattice. The \emph{glued sum} $U\gplu V$ of disjoint finite lattices $U$ and $V$ is $(U\cup (V\setminus\set{0_V}),\leq)$
where $x\leq y$ iff $x\leq_U y$, $x\leq_V y$, or $(x,y)\in U\times V$. Note that the $U\gplu V$ is a particular case of Hall--Dilworth gluing. 
In order to formulate the main result of the paper, we define
\begin{equation}
\gmx(n):= (n-1)\cdot 2^{n-1}\,\,\text{  and } \,\,
\gsb(n):=(n-1)\cdot 2^{n-2} + 2^{n-3};
\label{eq:njpdFwmtS}
\end{equation}
The acronyms in the subscripts come from 
``\raisebox{-.3ex}{M}a\raisebox{-.3ex}{X}imal'' and ``\raisebox{-.3ex}{S}u\raisebox{-.3ex}{B}maximal'.

\begin{theorem}\label{thmmain} For any positive integer  $n$, the following three assertions hold. 

\textup{(a)} Let $A$ be an $n$-element congruence distributive algebra. Then we have that
$\CE A \leq \gmx(n)$. Furthermore, if $\,\CE A = \gmx(n)$, then $\Con A$ is a boolean lattice and $|\Con A|=2^{n-1}$.

\textup{(b)} Let $L$ be an $n$-element lattice. Then $\CE L \leq \gmx(n)$. Furthermore, 
$\CE L = \gmx(n)$ if and only if $L$ is the $n$-element chain.

\textup{(c)} Let $L$ be an $n$-element lattice such that $\CE L<\gmx(n)$. Then $n\geq 4$ and   $\CE L\leq\gsb(n)$.
Furthermore, $\CE L=\gsb(n)$ if and only if there is exactly one $2$-element antichain in $L$. Equivalently, 
$\CE L=\gsb(n)$ if and only if 
there are finite chains $C'$ and $C''$ such that $L=C'\gplu B_4\gplu C''$.
\end{theorem}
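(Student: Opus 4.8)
\textbf{Proof plan for Theorem~\ref{thmmain}.}

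The plan is to reduce everything to counting heights in equivalence lattices via Remark~\ref{ob:dsnZdlsmnhs}, so that $\CE A = 2\sum_{\Theta\in\Con A}\heq(\Theta)$, and then to analyze the sublattice $\Con A$ of $\Equ A$ combinatorially. For part (a), I would first observe that since $A$ has $n$ elements, the chain of congruences $\botof A = \Theta_0 \prec \Theta_1 \prec \dots \prec \Theta_m = \topof A$ along any maximal chain in $\Con A$ satisfies $\heq(\Theta_i) \le i$ by \eqref{eqtxt:pRnwq} and \eqref{eq:heqnb}, but more usefully, for congruence distributive (hence every interval is distributive) $A$ we can pick $n-1$ ``coordinate'' join-irreducibles; the key inequality is that $\sum_{\Theta\in\Con A}\heq(\Theta)$ is maximized when $\Con A$ is as ``spread out'' as possible, and I expect the bound $(n-1)2^{n-2}$ for this sum to come from the fact that each of the $n-1$ steps of height contributes to at most half of the congruences, combined with $|\Con A|\le 2^{n-1}$. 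Concretely: if $\heq(\topof A)=n-1$ then $\Con A$ has a maximal chain of length $n-1$, and by distributivity $\Con A$ embeds into $2^{n-1}$ only if it IS $2^{n-1}$; the sum of heights over a Boolean lattice $2^{n-1}$ equals $(n-1)2^{n-2}$, giving $\CE A = (n-1)2^{n-1} = \gmx(n)$. The equality analysis then forces $|\Con A| = 2^{n-1}$ and $\Con A$ Boolean; this is the cleanest part and should follow from: (i) $\heq(\Theta)\le |\Con A\text{ below the top}|$-type counting, and (ii) the known fact that among distributive lattices of length $n-1$, the Boolean one $2^{n-1}$ maximizes both the cardinality and the height-sum simultaneously.

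For part (b), lattices are congruence distributive, so $\CE L \le \gmx(n)$ is immediate from (a). For the equality case I would show that if $L$ is an $n$-element lattice with $\CE L = \gmx(n)$, then by (a) we have $|\Con L| = 2^{n-1}$; but a classical bound (e.g.\ Freese, or the references \cite{czedli145,czedli148,czedli152} cited in the paper) states that an $n$-element lattice has $|\Con L| \le 2^{n-1}$ with equality iff $L$ is a chain --- so I would either cite this or reprove it quickly: an $n$-element lattice has at most $n-1$ prime quotients up to projectivity, with equality exactly for chains, and each join-irreducible congruence is generated by a prime quotient. The converse, that $C_n$ has $\CE{C_n} = \gmx(n)$, is a direct computation: $\Con{C_n} \cong 2^{n-1}$, and $\sum$ of heights over $2^{n-1}$ is $(n-1)2^{n-2}$, so $\CE{C_n} = 2(n-1)2^{n-2} = (n-1)2^{n-1}$.

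For part (c), the strategy is to understand the ``second-largest'' configuration. If $L$ is not a chain, it has a two-element antichain, so $n\ge 4$ is needed (the three-element non-chain is impossible as a lattice; actually $n\ge 4$ because the smallest non-chain lattice is $B_4$). The idea is: $L$ not a chain forces $|\Con L| \le 2^{n-2} + 2^{n-3}$ or, more precisely, forces the congruence lattice to be ``missing'' some Boolean structure; the extremal next case is when $L = C' \gplu B_4 \gplu C''$, where gluing behaves well for congruences: $\Con{C'\gplu B_4 \gplu C''}$ is (up to the standard Hall--Dilworth gluing description) built from $\Con{C'}$, $\Con{B_4}$, $\Con{C''}$, and one computes $\CE L = \gsb(n)$ directly. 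I would prove the upper bound $\CE L \le \gsb(n)$ by a case analysis on how far $\Con L$ is from $2^{n-1}$: removing any ``dimension'' from the Boolean lattice of congruences drops the height-sum by the largest amount precisely in the $B_4$-glued configuration, and any other non-chain structure loses strictly more. The converse direction --- characterizing when $\CE L = \gsb(n)$ exactly --- requires showing that a single two-element antichain forces $L$ to decompose as $C'\gplu B_4\gplu C''$: the antichain $\{a,b\}$ has meet $p$ and join $q$, the interval $[p,q]$ must be exactly $B_4$ (if it were larger it would contain more antichains or more congruences would be lost), and everything outside $[p,q]$ must be a chain glued on.

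The main obstacle I anticipate is part (c)'s upper bound and its equality analysis: we need a sharp quantitative statement of the form ``if $L$ is an $n$-element lattice that is not a chain, then $\sum_{\Theta\in\Con L}\heq(\Theta) \le (n-1)2^{n-3} + 2^{n-4}$, with equality iff $L = C'\gplu B_4\gplu C''$.'' Proving this sharp bound seems to require either (1) a careful structure theory for $n$-element lattices with near-maximal congruence lattices --- knowing that the ``second-best'' $|\Con L|$ is $3\cdot 2^{n-3}$ and is achieved only by the $B_4$-gluing --- together with a monotonicity argument that the height-sum is controlled by $|\Con L|$ and the specific poset shape of $\Con L$; or (2) an induction on $n$ via the gluing decomposition, peeling off a top or bottom element and tracking how $\CE$ changes, which is delicate because removing an element from a lattice can change the congruence lattice unpredictably unless the element is doubly irreducible. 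I would pursue route (1), leaning on the cited work \cite{czedli145,czedli148,czedli152,kulinmuresan} for the enumeration of lattices with many congruences, and reserve the bulk of Section~\ref{sect:proof} for making the height-sum estimate sharp in the $B_4$-gluing case and strict in all others.
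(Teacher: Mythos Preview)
Your plan has a genuine gap at the heart of part~(a): you never supply a mechanism for the inequality $\sum_{\Theta\in\Con A}\heq(\Theta)\le (n-1)2^{n-2}$. You assert that ``each of the $n-1$ steps of height contributes to at most half of the congruences,'' but $\heq$ is height in $\Equ A$, not in $\Con A$, and there is no layer decomposition of $\Equ A$ that makes this precise. The naive pairing argument actually goes the wrong way: if $\Con A$ is Boolean and $\Theta'$ is the complement of $\Theta$, semimodularity of $\Equ A$ gives $\heq(\Theta)+\heq(\Theta')\ge \heq(\topof A)=n-1$, a \emph{lower} bound on the sum, not an upper bound. So ``Boolean maximizes the height-sum'' is not a one-liner, and your sketch does not indicate how to get around this. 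The paper's proof supplies exactly the missing device: for an atom $\alpha\in\At{\Con A}$ it splits $\Con A$ into $\CA=\filter\alpha$ and $\CB=\Con A\setminus\CA$, uses the map $\falf\colon\CB\to\CA$, $\gamma\mapsto\alpha\vee\gamma$ (injective by distributivity) together with $\heq(\gamma)\le\heq(\falf(\gamma))-1$, and obtains the recursive inequality $\CE A\le 2\,\CE{A/\alpha}+(2\En\alpha-2)\,|\Con{A/\alpha}|$, which then yields $\gmx(n)$ by induction on $n$ and a short calculus estimate. This recursive/quotient idea is absent from your plan and is essential.

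For part~(c) there is a second, smaller problem: your claimed value for the second-largest $|\Con L|$ among $n$-element lattices, $3\cdot 2^{n-3}$, is incorrect --- it is $2^{n-2}$ (Lemma~\ref{lemma:manycon}(c)). More seriously, even granting the correct structural result, you acknowledge that passing from a bound on $|\Con L|$ to a sharp bound on $\CE L$ requires a separate ``monotonicity'' argument that you do not specify; the paper handles this by the same inductive scheme, splitting into the cases ``some $L/\alpha$ is not a chain'' versus ``every $L/\alpha$ is a chain,'' and in the first case applying the induction hypothesis to $L/\alpha$ directly, in the second case using $\heq(\alpha)\ge 2$ and a different auxiliary estimate. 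Your route~(2) (induction by peeling) is in spirit the right one; the point is that one peels via a congruence atom and the quotient $L/\alpha$, not via removing a lattice element, and that is what makes the bookkeeping tractable.
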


\section{Proving Theorem~\ref{thmmain}}\label{sect:proof}

To prove the theorem, we need  several preparatory statements. For elements $x$ and $y$ of a lattice $L$, the least congruence $\Theta\in\Con L$ containing $(x,y)$ is denoted by $\con(x,y)$. Similarly, $\equ(x,y)$ stands for the least equivalence relation containing the pair $(x,y)$. An element $a\in L$ is an \emph{atom} if 
$0\prec a$. The set of atoms of a lattice $L$ will be denoted by $\At L$. 
The following result is a counterpart of Theorem~\ref{thmmain}.

\begin{lemma}[Cz\'edli \cite{czedli145}]\label{lemma:manycon}
 Let $n$ be  a positive integer. For an $n$-element lattice $L$ and an $n$-element congruence distributive algebra $A$, the following hold.

\textup{(a)} $|\Con A|\leq 2^{n-1}$. Also, if $\,|\Con A| = 2^{n-1}$, then $\Con A$ is a boolean lattice.

\textup{(b)} $|\Con L|\leq 2^{n-1}$.  Furthermore, $|\Con L| =  2^{n-1}$ if and only if $L$ is a chain.

\textup{(c)} If  $|\Con L| < 2^{n-1}$, then $|\Con L|\leq 2^{n-2}$. Also, $|\Con L|= 2^{n-2}$ if and only if there are finite chains $C'$ and $C''$ such that $L=C'\gplu B_4\gplu C''$.
\end{lemma}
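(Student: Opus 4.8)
The plan is to route everything through the structure theory of finite distributive lattices, which applies because $\Con A$ is distributive by hypothesis. I would use two standard facts about a finite distributive lattice $D$: it is isomorphic to the lattice of down-sets of the poset $\Jir D$ of its join-irreducible elements, so that $|D|\le 2^{|\Jir D|}$ with equality exactly when $\Jir D$ is an antichain (that is, when $D$ is boolean); and the height of its top element equals $|\Jir D|$. For part (a) I would first note that $\Con A$ is a sublattice of $\Equ A$ containing $\botof A$ and $\topof A$, so every chain of $\Con A$ is a chain of $\Equ A$; hence the height of $\topof A$ in $\Con A$ is at most $\heq(\topof A)=n-1$ by \eqref{eq:heqnb}. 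Combining the two facts gives $|\Con A|\le 2^{|\Jir{\Con A}|}\le 2^{n-1}$, and if $|\Con A|=2^{n-1}$ then both inequalities are tight, forcing $\Jir{\Con A}$ to be an antichain and $\Con A$ to be boolean. This settles (a).

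The crux of (b) and (c) is the auxiliary claim that I would prove by induction on $n$: \emph{for a finite lattice $L$ with $|L|=n$, the height of $\topof L$ in $\Con L$ equals $n-1$ if and only if $L$ is a chain}. One direction is clear, since $\Con C_n$ is boolean of height $n-1$. For the converse, a maximal chain $\botof L=\gamma_0\prec\gamma_1\prec\dots\prec\gamma_{n-1}=\topof L$ of $\Con L$ realizing this height is, by the sublattice observation above, also maximal in $\Equ L$, so $\heq(\gamma_i)=i$ for every $i$. In particular $\gamma_1$ has a single two-element block; since collapsing two incomparable elements or a non-covering comparable pair always yields a block with more than two elements, this block is a prime interval $\{a,b\}$ with $a\prec b$ and $\gamma_1=\con{a,b}=\equ(a,b)$. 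The correspondence theorem identifies $[\gamma_1,\topof L]$ with $\Con(L/\gamma_1)$, in which $\gamma_1\prec\dots\prec\gamma_{n-1}$ has the maximal length $|L/\gamma_1|-1$; by induction $L/\gamma_1$ is a chain. A short direct computation then shows that a lattice admitting a chain quotient by a single prime-interval block has no incomparable pair, so $L$ is a chain. Part (b) follows at once: $|\Con L|=2^{n-1}$ forces $2^{n-1}\le 2^{|\Jir{\Con L}|}\le 2^{n-1}$, hence $|\Jir{\Con L}|=n-1$ and the auxiliary claim gives that $L$ is a chain; the reverse implication is the easy direction.

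For part (c), if $|\Con L|<2^{n-1}$ then $L$ is not a chain by (b), so by the auxiliary claim the height of $\topof L$ in $\Con L$ is at most $n-2$; thus $|\Con L|\le 2^{|\Jir{\Con L}|}\le 2^{n-2}$, which yields the asserted gap and forces $n\ge 4$. The implication $L=C'\gplu B_4\gplu C''\Rightarrow|\Con L|=2^{n-2}$ I would obtain from the fact that a congruence of a single-point glued sum is determined by its restrictions to the summands, giving $\Con(U\gplu V)\cong\Con U\times\Con V$; hence $\Con(C'\gplu B_4\gplu C'')\cong\Con C'\times\Con B_4\times\Con C''$ has size $2^{|C'|-1}\cdot 4\cdot 2^{|C''|-1}=2^{n-2}$.

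The main obstacle is the converse of (c), reconstructing the glued-sum form from $|\Con L|=2^{n-2}$. Here the tightness of the bound forces $\Con L$ to be boolean with exactly $n-2$ atoms, each atom a principal congruence $\con{c,d}$ of a prime interval, and conversely every prime-interval congruence an atom (join-irreducible equals atom in a boolean lattice). A maximal chain of $\Con L$ now has $\heq$-values strictly increasing from $0$ to $n-1$ in only $n-2$ steps, so exactly one covering step is a ``double jump'' in $\Equ L$. I expect the heart of the argument to be locating an atom $\theta$ whose quotient $L/\theta$ is forced to have maximal $|\Jir|$, hence is a chain by the auxiliary claim: either some atom is a single two-element block, in which case modding out strips one chain element and I can induct, or the relevant atom collapses two perspective prime intervals $[c,d]\nearrow[e,f]$ spanning a sublattice $B_4=\{c,d,e,f\}$, and $L/\theta$ is a chain into which this $B_4$ is reinserted at two adjacent points, yielding $L=C'\gplu B_4\gplu C''$. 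The delicate points I would treat carefully are (i) ensuring a single-block atom is split at a chain element rather than at a middle element of the $B_4$, a split of the latter kind producing a pentagon $N_5$ with $|\Con N_5|=5$, which is not a power of two and is therefore excluded by the hypothesis; and (ii) verifying that collapsing the perspective pair does reduce $L$ to a chain carrying the $B_4$. Finally, the equivalence of ``exactly one two-element antichain'' with the glued-sum form is a routine unfolding of the definition of $\gplu$.
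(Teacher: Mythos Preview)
The paper does not prove this lemma: it is quoted from \cite{czedli145} (with part~(b) credited earlier to Freese~\cite{freese}) and then used as a tool in the proof of Theorem~\ref{thmmain}. There is thus no in-paper proof to compare against; I comment on your argument on its own terms.

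Parts (a) and (b), and the inequality plus the ``if'' direction of (c), are correct. Routing through $|\Jir{\Con A}|$ and the height bound $\heq(\topof A)=n-1$ in $\Equ A$ is the standard move, and your auxiliary claim for (b) is exactly what the paper itself later isolates as \eqref{eq:jvjkrsflGr} and proves for its own purposes.

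The sketch for the ``only if'' of (c) has a loose end. Once $\Con L$ is boolean with $n-2$ atoms, independence in the semimodular $\Equ L$ forces their $\heq$-values to sum to $n-1$, so exactly one atom has $\heq=2$ and all others have $\heq=1$. Hence for $n\ge5$ your first branch (an atom with a single two-element block) is \emph{always} available: modding out by it yields an $(n-1)$-element lattice with exactly $2^{(n-1)-2}$ congruences, and induction on (c) together with the $N_5$-exclusion lifting you describe (parallel to the argument around \eqref{eq:nzgtrmPnS} in the paper) finishes cleanly. Your second branch, taking the $\heq=2$ atom $\theta$ and asserting that it ``collapses two perspective prime intervals spanning a $B_4$'', is not justified as stated: a priori $\theta$ could have a single three-element block rather than two two-element blocks, and even in the two-block case you have not argued that the four endpoints form a covering square rather than two separated prime intervals. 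Both obstructions can in fact be removed (the first because $L/\theta$ being a chain would then force $L$ itself to be a chain via \eqref{eq:jvjkrsflGr}-type reasoning), but as written the dichotomy is incomplete. Committing to the first branch and inducting is the clean fix.
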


Prior to Cz\'edli \cite{czedli145},  part (b) of this lemma was proved by Freese~\cite{freese}. Theorem~\ref{thmmain} needs a more involved proof than Lemma~\ref{lemma:manycon}; Figure \ref{fig:ex} allows us to guess why.

\begin{figure}[ht]
\vspace{-1em}
\centerline
{\includegraphics[scale=0.9]{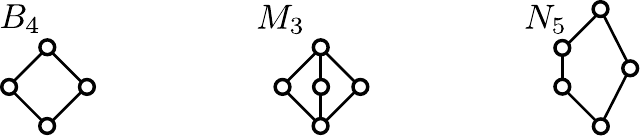}}
\vspace{-0.5cm}
\caption{$B_4$, $M_3$, and $N_5$}\label{figfadsom}
\label{fig:M3N5}
\vspace{-1em}
\end{figure}

\emph{Up congruence perspectivity} and \emph{down congruence perspectivity} will be denoted by $\peup$ and $\pedn$, respectively. That is, for intervals $[a,b]$ and $[c,d]$ of a lattice $L$, $[a,b]\peup[c,d]$ means that 
$b\vee c=d$ and $a\leq c$ while $[a,b]\pedn[c,d]$ stand for the conjunction of $a\wedge d=c$ and $b\geq d$.
Congruence perspectivity and \emph{congruence projectivity}
are denoted by $\pers$ and $\proj$, respectively; 
$[a,b]\pers[c,d]$ means that $[a,b]\peup[c,d]$ or  $[a,b]\pedn[c,d]$ while $\proj$ is the transitive and reflexive closure of $\pers$. An interval $[a,b]$ is \emph{prime} 
if $a\prec b$. 
The least element and the largest element of an interval $I$ are denoted by $0_I$ and $1_I$, respectively. Except for its part (1), the following lemma belongs to the folklore.

\begin{lemma}\label{lemma:aboutcon}
Let $L$ be a \emph{finite} lattice. Then the following assertions hold.

\textup{(1)} By Gr\"atzer~\cite{gratzTechn},  
an $\alpha\in\Equ L$ is a congruence of $L$ if and only if the $\alpha$-blocks are intervals and for any $x,y,z\in L$ the following implication and its dual hold:
\begin{equation}
\text{if $x\prec y$, $x\prec z$, and $(x,y)\in\alpha$, then $(z,y\vee z)\in\alpha$.}
\end{equation}

\textup{(2)} $\Jir{\Con L}=\set{\con(a,b):a\prec b}$.  
Consequently, a congruence is determined by the prime intervals it collapses.

\textup{(3)} For prime intervals   $[a,b]$ and $[c,d]$ 
of $L$, 
\begin{equation}
(c,d)\in\con(a,b) \iff \con(a,b)\geq\con(c,d)  \iff [a,b]\proj[c,d].
\end{equation}

\textup{(4)} Let $\Theta\in\Con L$ and assume that  $X,Y,U,V,S,T$ are $\Theta$-blocks. Then 
\begin{align}
&X\vee Y=U \iff 0_X\vee 0_Y=0_U, \quad
X\wedge Y= V \iff 1_X\wedge 1_Y = 1_V, \label{eq:sDkvlDa}\\
&\text{whereby }\,\, S\leq T \iff  0_S\leq 0_T  \iff  1_S\leq 1_T,\label{eq:sDkvlDb}\\
& \text{and so } \kern8.5pt S=T \iff  0_S=0_T  \iff  1_S=1_T.\label{eq:sDkvlDc}
\end{align}
\end{lemma}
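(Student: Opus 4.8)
I would treat the four parts in the order (1), (2), (4), (3), doing genuinely new work only where the statement is not immediate. Part (1) I would simply quote from Gr\"atzer~\cite{gratzTechn}; for orientation it is worth recording the easy half first: the blocks of any $\alpha\in\Con L$ are convex sublattices of $L$, hence — $L$ being finite — intervals, and the displayed implication together with its dual is the substitution property specialized to $\vee$ and $\wedge$. That these conditions are \emph{sufficient} is precisely the content of \cite{gratzTechn}, which I would cite verbatim rather than reprove.

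\textbf{Part (2).} First I would show that every $\alpha\in\Con L$ is the join in $\Con L$ of congruences of the form $\con(a,b)$ with $a\prec b$. Indeed $\alpha=\bigvee\set{\con(x,y):(x,y)\in\alpha}$, and for $x<y$ one has $\con(x,y)=\bigvee_{i=1}^{k}\con(z_{i-1},z_i)$ along any maximal chain $x=z_0\prec z_1\prec\dots\prec z_k=y$ in $[x,y]$ (such a chain exists since $L$ is finite): the inequality $\geq$ holds because each $(z_{i-1},z_i)$ lies in $\con(x,y)$, and $\leq$ holds because $(x,y)$ lies in the join by transitivity. This proves $\Jir{\Con L}\subseteq\set{\con(a,b):a\prec b}$ and also gives the ``consequently'' clause, since $\alpha=\bigvee\set{\con(a,b):a\prec b,\ (a,b)\in\alpha}$ shows a congruence is determined by the prime intervals it collapses. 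For the reverse inclusion I would show each $\con(a,b)$ with $a\prec b$ is join-irreducible: if $\con(a,b)=\bigvee_{j\in J}\theta_j$ (with $J$ finite, as $\Con L$ is finite), then $(a,b)$ is joined by a sequence $a=c_0,c_1,\dots,c_m=b$ with consecutive entries related by some $\theta_{j_t}$; replacing $c_t$ by $(c_t\wedge b)\vee a$ — which remains related to its neighbours since the $\theta_{j_t}$ are compatible with $\vee$ and $\wedge$, and which equals $a$, resp.\ $b$, at the endpoints — we may assume $a\leq c_t\leq b$, so $c_t\in\set{a,b}$ because $a\prec b$; hence some consecutive pair is $(a,b)$ or $(b,a)$, whence $\con(a,b)\leq\theta_{j_t}\leq\con(a,b)$.

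\textbf{Part (4).} Here I would use that $x\mapsto x/\Theta$ is a lattice epimorphism onto $L/\Theta$ and that every block is an interval (part (1)). Suppose the blocks $X,Y,U$ satisfy $X\vee Y=U$, and pick $z\in U$. Then $z\wedge 0_X$ lies in the block $(X\vee Y)\wedge X=X$, so $z\wedge 0_X\geq 0_X$, i.e.\ $z\geq 0_X$; likewise $z\geq 0_Y$. Hence $0_X\vee 0_Y$ is the least element of $U$, that is, $0_U=0_X\vee 0_Y$. Conversely, if $0_X\vee 0_Y=0_U$, then the block of $0_X\vee 0_Y$, namely $X\vee Y$, equals the block of $0_U$, namely $U$. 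This is the first equivalence of \eqref{eq:sDkvlDa}; the second is dual, using $1$'s and meets. Then \eqref{eq:sDkvlDb} follows at once: $S\leq T\iff S\vee T=T\iff 0_S\vee 0_T=0_T\iff 0_S\leq 0_T$, and symmetrically $S\leq T\iff S\wedge T=S\iff 1_S\leq 1_T$; finally \eqref{eq:sDkvlDc} is \eqref{eq:sDkvlDb} applied in both directions.

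\textbf{Part (3), and the main obstacle.} The first ``$\iff$'' in the displayed chain is immediate from the definition of $\con(c,d)$ as the least congruence containing $(c,d)$. For the second, one direction is routine: if $[a,b]\peup[c,d]$ then $(a,b)\in\con(a,b)$ forces $(c,d)=(a\vee c,\,b\vee c)\in\con(a,b)$, and dually for $\pedn$; so by induction on the length of a projectivity chain $[a,b]\proj[c,d]$ yields $(c,d)\in\con(a,b)$. The converse is the one place where part (1) must really be used: one shows that the relation $u\eqflat v$ defined by ``$u=v$, or every prime interval in a maximal chain between $u\wedge v$ and $u\vee v$ is $\proj$-related to $[a,b]$'', after passing to its transitive closure, is a congruence of $L$ — the substitution conditions of part (1) reduce to the classical fact that prime perspectivities are transported by $\vee$ and $\wedge$. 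Since this relation contains $(a,b)$, it contains $\con(a,b)$, which gives the converse. I expect this last verification to be the main obstacle — everything else in the lemma is bookkeeping — and, as it is the standard description of principal congruences of a finite lattice (Gr\"atzer), I would be content to cite it rather than reprove it in full.
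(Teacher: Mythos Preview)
The paper does not prove this lemma at all: it introduces it with the remark ``Except for its part (1), the following lemma belongs to the folklore'' and then moves on. So there is nothing to compare your argument against line by line; the paper simply cites Gr\"atzer for (1) and asserts (2)--(4) without proof.

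Your proposal, by contrast, actually supplies the folklore arguments, and they are correct. Your treatment of (2) (reduce an arbitrary $\con(x,y)$ to joins over a maximal chain, then show $\con(a,b)$ with $a\prec b$ is join-irreducible by the standard ``project the connecting sequence into $[a,b]$'' trick) is the usual one. Your proof of (4) is clean and correct; note only that you do not actually need part (1) to know the blocks are intervals --- convexity of congruence blocks in any lattice plus finiteness already gives this --- so (4) is independent of the citation to \cite{gratzTechn}. For (3), you correctly identify that the nontrivial direction is showing that $\proj$-closure of $[a,b]$ generates a congruence, and your plan to invoke Gr\"atzer's criterion from (1) for this is exactly how it is normally done (see, e.g., Gr\"atzer~\cite{gratzerpicture}). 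In short: your write-up is more than the paper provides, and it is sound; the only stylistic suggestion is that since the paper is content to call (2)--(4) folklore, you could equally well cite \cite{gratzerLTFbook} or \cite{gratzerpicture} and omit the details.
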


For an element $a$ of a lattice $L$, we use the notation
$\filter a=\filter_L a=:\set{x\in L: x\geq a}$ and $\ideal a=\ideal_L a:=\set{x\in L: x\leq a}$.
We need the following map (AKA function):
\begin{equation}
\faf\colon L\setminus\filter a\to\filter a,\text{ defined by }x\mapsto a\vee x.
\label{eq:faf}
\end{equation}

\begin{lemma}\label{lemma:dlatFlt} Let $L$ be a \emph{finite} distributive lattice.

\textup{(i)} If  $a\in\At L$,
then $\faf$ defined in \eqref{eq:faf} is a lattice embedding.

\textup{(ii)} If $a\in \At L$ has a complement, then $\faf$ is an isomorphism.

\textup{(iii)}  
$L$ is a boolean if and only if $\faf$ is an isomorphism for  (equivalently, if $\faf$ is bijective) for each atom $a$ of $L$.
\end{lemma}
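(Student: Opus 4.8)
The plan is to prove Lemma~\ref{lemma:dlatFlt} by establishing (i), then (ii), then (iii), each building on the previous.

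For part (i), I would start from the observation that $\faf$ is always a join-homomorphism on any lattice: $\faf(x \vee y) = a \vee x \vee y = (a \vee x) \vee (a \vee y) = \faf(x) \vee \faf(y)$, and this needs no hypothesis. The work is in the meet. Here distributivity gives $\faf(x \wedge y) = a \vee (x \wedge y) = (a \vee x) \wedge (a \vee y) = \faf(x) \wedge \faf(y)$, so $\faf$ is a lattice homomorphism without using that $a$ is an atom. Thus $a \in \At L$ is only needed for injectivity. For injectivity, suppose $x, y \in L \setminus \filter a$ with $a \vee x = a \vee y$; I want $x = y$. By distributivity, $x = x \wedge (a \vee x) = x \wedge (a \vee y) = (x \wedge a) \vee (x \wedge y)$. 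Since $a$ is an atom, $x \wedge a \in \{0, a\}$; but $x \wedge a = a$ would force $x \geq a$, contradicting $x \notin \filter a$, so $x \wedge a = 0$ and hence $x = x \wedge y$, i.e. $x \leq y$. Symmetrically $y \leq x$, so $x = y$. That the codomain is correct ($a \vee x \in \filter a$) is immediate. The only mild subtlety is that $\faf$ being a \emph{lattice} embedding requires checking both operations are preserved and the map is injective; an order-embedding check alone would not obviously capture meet-preservation in a non-obvious way, but since we have shown homomorphism plus injectivity, we are done.

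For part (ii), assume additionally that $a$ has a complement $a'$, so $a \wedge a' = 0$ and $a \vee a' = 1$. By (i), $\faf$ is already an embedding, so it suffices to prove surjectivity onto $\filter a$. Given $z \in \filter a$, I claim $x := z \wedge a'$ is a preimage: $\faf(x) = a \vee (z \wedge a') = (a \vee z) \wedge (a \vee a') = z \wedge 1 = z$, using $a \vee z = z$ (because $z \geq a$) and distributivity. I also need $x \in L \setminus \filter a$, i.e. $x \not\geq a$: if $z \wedge a' \geq a$ then $a \leq a'$, whence $a = a \wedge a' = 0$; this contradicts $a$ being an atom (atoms are nonzero). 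Wait — one should double check the degenerate possibility that $L$ has no atoms or that the domain is empty; but the hypothesis explicitly takes $a \in \At L$, so $0 \prec a$ and $a \neq 0$, and $L \setminus \filter a \neq \emptyset$ since $0 \notin \filter a$. Hence $\faf$ is a bijective lattice homomorphism, i.e. an isomorphism $L \setminus \filter a \to \filter a$.

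For part (iii), the forward direction: if $L$ is boolean then every atom $a$ has a complement, so by (ii) $\faf$ is an isomorphism; this also shows $\faf$ is in particular bijective. For the converse I would argue the contrapositive or argue directly: suppose $\faf$ is bijective for every atom $a$ (bijectivity of a lattice homomorphism forces it to be an isomorphism, and by (i) it is a homomorphism once we know $L$ is distributive — which is a standing hypothesis of the whole lemma). A clean route: in a finite distributive lattice, $L$ is boolean iff $L$ is complemented iff every atom has a complement iff $L \cong$ the power set of $\At L$. So it suffices to show each atom $a$ has a complement assuming $\faf$ is onto $\filter a$. Consider $1 \in \filter a$; surjectivity gives $b \in L \setminus \filter a$ with $a \vee b = 1$. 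Then $a \wedge b$: since $a$ is an atom, $a \wedge b \in \{0, a\}$, and $a \wedge b = a$ would mean $b \geq a$, contradicting $b \notin \filter a$; so $a \wedge b = 0$ and $b$ is a complement of $a$. Since this holds for every atom, $L$ is a finite distributive lattice in which every atom is complemented, hence boolean. I anticipate the main obstacle being the internal bookkeeping in (iii): one must be careful that "$\faf$ bijective" already delivers "$\faf$ an isomorphism" (which it does, given (i) supplies the homomorphism property under the standing distributivity hypothesis), and that the standard equivalence "finite distributive + every atom complemented $\Rightarrow$ boolean" is invoked cleanly rather than reproved; everything else is a short distributive-law manipulation.
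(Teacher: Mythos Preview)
Your proof is correct. Parts (i) and (ii) closely parallel the paper's argument: for (ii) the preimage $z\wedge a'$ and its verification are identical, and for (i) both you and the paper first observe that distributivity makes $\faf$ a homomorphism; the only cosmetic difference is in the injectivity step, where the paper argues that $b_1$ and $b_2$ are both complements of $a$ in the interval $[0,a\vee b_1]$ and invokes uniqueness of complements, while you compute directly $x = x\wedge(a\vee y) = (x\wedge a)\vee(x\wedge y) = x\wedge y$.

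The genuinely different part is the converse in (iii). The paper appeals to Birkhoff's representation $L\cong\downset{\Jir L}$ and argues that $\Jir L$ must be an antichain: if $a<b$ were join-irreducibles with $a$ an atom, then $b$ could not be written as $a\vee x$ with $x\notin\filter a$, so $\faf$ would miss $b$. You instead use surjectivity of $\faf$ at the single element $1$ to produce a complement of each atom, and then invoke ``finite distributive with every atom complemented $\Rightarrow$ boolean.'' Your route is pleasantly concrete, but that last implication is not quite a textbook triviality; it deserves a one-line justification (e.g.\ if $p\in\Jir L$ and $a\leq p$ is an atom with complement $a'$, then $p=a\vee(p\wedge a')$ forces $p=a$, so $\Jir L=\At L$ is an antichain). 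With that line added, both arguments ultimately rest on the same Birkhoff structure theorem, and neither is materially shorter than the other.
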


\begin{proof} Clearly, $\faf$ is a lattice homomorphism by distributivity.
Assume that $b_1,b_2\in L\setminus\filter a$ such that
$\faf(b_1)=\faf(b_2)$. For $i\in\set{1,2}$, we have that $a\wedge b_i=0$ since $b_i\not\geq a \succ 0$. Hence, $b_i$ is a complement of $a$ in the interval $[0, a\vee b_1]=[0, a\vee b_2]$. But this interval is a distributive lattice, whereby the uniqueness of complements in distributive lattices imply that $b_1=b_2$. That is,  $\faf$ is injective, proving part (i).

Next, assume that $a\in \At L$ with a complement $a'$. Let $c\in\filter a$. Then $\faf(c\wedge a')=(c\wedge a')\vee a=(c\vee a)\wedge(a'\vee a)=c\wedge 1=c$. If we had that $c\wedge a'\in\filter a$, then $a\leq (c\wedge a')\wedge a=c\wedge (a'\wedge a)=c\wedge 0=0$ would be a contradiction. 
Hence, $c\wedge a'\in L\setminus\filter a$. Thus, $\faf$ is surjective and so it is an isomorphism, proving part (ii). 

The ``only if'' part of (iii) follows from part (ii). 
To prove the ``if'' part, assume that $L$ is a finite distributive lattice such that $\faf$ is bijective for every  $a\in\At L$. 
Let $\Jir L$ denote the poset  of (nonzero) join-irreducible elements of $L$, and let  $\downset {\Jir L}=(\downset{\Jir L},\cup,\cap)$ be the lattice of its down-sets.  By the well-known structure theorem of finite distributive lattices, see, e.g., Gr\"atzer~\cite[Theorem 107]{gratzerLTFbook},
\begin{equation}
L\cong \downset{\Jir L}.
\label{eq:wMrkgrnbR}
\end{equation}

We claim that $\Jir L$ is an antichain. Supposing the contrary, let $a,b\in \Jir L$ such that $a<b$. We can assume that $a\in\At L$ since otherwise we can replace it by an atom of $\ideal a$. The join-irreducibility of $b$ implies that  $b\neq \faf(x)$ for any $x\in L\setminus\filter a$, contradicting the bijectivity of $\faf$. Hence, $\Jir L$ is an antichain and $\downset{\Jir L}$ is the (boolean) powerset lattice.  By \eqref{eq:wMrkgrnbR}, $L$ is boolean, completing the proof.  
\end{proof}

\begin{proof}[Proof of Theorem \ref{thmmain}] 
As a convention for the whole proof, $\alpha$ always denotes an \emph{atom} of the congruence lattice of our $n$-element algebra or lattice.
We prove our statements by  induction on $n$. 
For $n\in\set{1,2}$, $\Con A=\Equ A$ and the statement is clear.  So let $n\geq 3$, and assume that all the three parts of the theorem hold for all algebras and lattices that have fewer than $n$ elements. Let $A=(A,F)$ be an $n$-element  congruence distributive algebra. For an atom $\alpha\in\At{\Con A}$, we define
\begin{equation}
\CA(A,\alpha):=\set{\Theta\in \Con A: \alpha\leq\Theta}
\text{ and } \CB(A,\alpha):=\Con A\setminus\CA(A,\alpha),
\label{eq:CACB}
\end{equation}
According to \eqref{eq:faf}, $\falf$ is the map $ \CB(A,\alpha)\to \CA(A,\alpha)$ defined by $\beta\mapsto \alpha\vee \beta$.

\begin{figure}[ht]
\vspace{-0.5em}
\centerline
{\includegraphics[width=\textwidth]{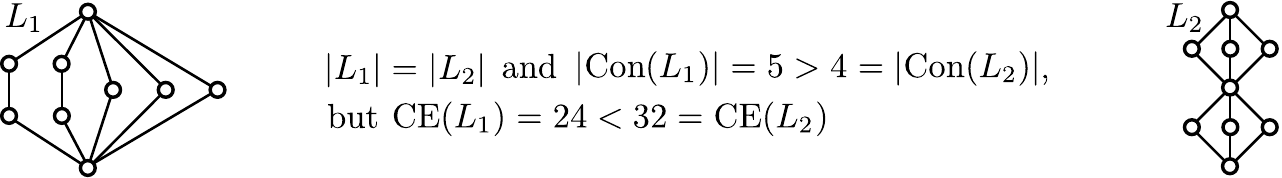}}      
\vspace{-0.8cm}
\caption{An example with two lattices}\label{figfadsom}
\label{fig:ex}
\vspace{-0.5em}
\end{figure}

For  $\beta\in\CA(A,\alpha)$,  we define $\beta/\alpha\in \Con{A/\alpha}\subseteq \Equ{A/\alpha}$ in the usual way: $\beta/\alpha:=\set{(x/\alpha,y/\alpha):
(x,y)\in\beta}$. By the
Correspondence Theorem, see, for example, Burris and Sankappanavar~\cite[Thm.\ 6.20]{burrsank},
$\CA(A,\alpha)\cong \Con{A/\alpha}$. Applying the Second Isomorphism Theorem, see, e.g., Burris and Sankappanavar~\cite[Theorem 6.15]{burrsank}, to the algebra $(A,\emptyset)$ with no operation, we obtain that 
$\nb{\beta/\alpha }{(A/\alpha)}
=
\nb{\beta} A$.
Hence, it follows from \eqref{eq:heqnb} that
$\heq(\beta) = n- \nb\beta A= n-\nb\alpha A+ \nb\alpha A-\nb{\beta/\alpha}{(A/\alpha)}=\heq(\alpha) + |A/\alpha|-\nb{\beta/\alpha}{A/\alpha}= \heq(\alpha) + \heq(\beta/\alpha)$. That is, 
\begin{equation}
\text{for every }\beta\in\CA(A,\alpha),\quad \heq(\beta)= \heq(\beta/\alpha) +  \heq(\alpha).
\label{eq:bcsshtdK}
\end{equation}
Since the  map $\CA(A,\alpha)\to \Con{A/\alpha}$, defined by $\beta\to \beta/\alpha$ is a lattice isomorphism by the Correspondence Theorem, we obtain by \eqref{eq:shmpwWr}, \eqref{eq:EnThdqm}, and  \eqref{eq:bcsshtdK} that 
\begin{align}
&|\CA(A,\alpha)| = |\Con{A/\alpha}|,
\label{eq:tlFlfTrtnz}
\\
&\En\beta=\En{\beta/\alpha}+\En\alpha\text{ for every }\beta\in\CA(A,\alpha),\text{ and }
\label{eq:wWrkntg}\\
&\sum_{\beta\in\CA(A,\alpha)}\En\beta=\CE{A/\alpha}+ \En\alpha\cdot|\Con{A/\alpha}|.
\label{eq:hvnkRbntTs}
\end{align}

Next, let $\gamma\in\CB(A,\alpha)$. Then $\gamma<\alpha\vee \gamma=\falf(\gamma)$ gives that $\heq(\gamma)<\heq(\falf(\gamma))$. Hence,  using \eqref{eq:EnThdqm} and the fact that  the function $\heq$ takes integer values, 
\begin{equation}
\text{for }\gamma\in \CB(A,\alpha), \quad  \heq(\gamma)\leq\heq(\falf(\gamma))-1\text{ and }\En{\gamma}\leq\En{\falf(\gamma)}-2.
\label{eq:gMsfkgrT}
\end{equation}
At $\leq'$ and $\leq^\ast$ below, we use  \eqref{eq:gMsfkgrT} and the injectivity of  $\falf$ (see Lemma~\ref{lemma:dlatFlt}), while we use
\eqref{eq:hvnkRbntTs} and $|\CA(A,\alpha)|=|\Con{A/\alpha}$ at $=^\dag$.
\begin{align}
\begin{aligned}
\sum_{\gamma\in\CB(A,\alpha)} \En\gamma &\leq'
\sum_{\gamma\in\CB(A,\alpha)} \bigl(\En{\falf(\gamma)}-2\bigr) 
\leq^\ast \sum_{\beta\in\CA(A,\alpha)} \bigl(\En\beta-2\bigr)\cr
&=-2|\CA(A,\alpha)|+ \sum_{\beta\in\CA(A,\alpha)} \En\beta\cr
&=^\dag  \CE{A/\alpha}+\bigl(\En \alpha -2 \bigr)\cdot |\Con{A/\alpha}|.  
\end{aligned}
\label{eq:ngnybrTsnKs}
\end{align}
It follows from \eqref{eq:hvnkRbntTs} and \eqref{eq:ngnybrTsnKs} that
\begin{equation}
\CE A \leq 2\cdot \CE{A/\alpha} +\bigl(2\cdot\En \alpha -2 \bigr)\cdot |\Con{A/\alpha}|.  
\label{eq:lkhmsvlTknL}
\end{equation}
Next,  we claim that
\begin{equation}\left.
\parbox{10cm}{if the inequality in \eqref{eq:lkhmsvlTknL} happens to be an equality, then $\falf$  is bijective and 
$\heq(\gamma) =\heq(\falf(\gamma))-1$ 
holds for every $\gamma\in\CB(A,\alpha)$; in particular, it holds for $\gamma=\botof A$ and so $\heq(\alpha)=1$.
}\,\,\right\}
\label{eq:ckfpCtrkT}
\end{equation}
To see this, note that  $\alpha=\falf(\botof A)$ is an $\falf$-image and 
$\En\beta-2=2(\heq(\beta)-1)>0$ 
for every $\beta\in\CA(A,\alpha)\setminus\set{\alpha}$.
Hence if $\falf$ was not surjective, then $\leq^\ast$ above would be a strict inequality and so would \eqref{eq:lkhmsvlTknL}. This yields that $\falf$ is surjective, whereby it is bijective by Lemma~\ref{lemma:dlatFlt}(i).
We know from \eqref{eq:EnThdqm} that the two inequalities  occurring in \eqref{eq:gMsfkgrT} are equivalent and so are the corresponding strict inequalities. So if $\heq(\gamma) =\heq(\falf(\gamma))-1$ failed for some $\gamma\in\CB(A,\alpha)$, then \eqref{eq:gMsfkgrT} would give that 
$\En\gamma< \En {\falf(\gamma)}-2$, whence $\leq'$ and  \eqref{eq:lkhmsvlTknL}  would be  strict inequalities, contradicting our assumption. Thus, we have verified \eqref{eq:ckfpCtrkT}. 
Next, we claim that
\begin{equation}\left.
\parbox{10.5cm}
{if $\Con A$ is distributive, $\alpha\in\At{\Con A}$ has a complement in $\Con A$, and $\heq(\alpha)=1$, then    $\CE A = 2\cdot \CE{A/\alpha} + 2\cdot |\Con{A/\alpha}|$.}
\,\,\right\}
\label{eq:tnvgbrmnTzh}
\end{equation}
To show \eqref{eq:tnvgbrmnTzh}, note that $\alpha$ is an atom of $\Equ A$ since $\heq(\alpha)=1$. Hence, by the semimodularity of $\Equ A$, 
$\falf(\gamma)=\alpha\vee_{\Con A}\gamma=\alpha\vee_{\Equ A}\gamma$ covers $\gamma$ in $\Equ A$. So $\heq(\gamma) =\heq(\falf(\gamma))-1$ and $\En\gamma =\En{\falf(\gamma)}-2$  for all $\gamma\in\CB(A,\alpha)$. Hence, $\leq'$ in \eqref{eq:ngnybrTsnKs} is an equality. 
So is $\leq^\ast$ in \eqref{eq:ngnybrTsnKs} since  $\falf$ is bijective by Lemma~\ref{lemma:dlatFlt}(ii). Thus,  both \eqref{eq:ngnybrTsnKs} and \eqref{eq:lkhmsvlTknL} are equalities, implying the validity of \eqref{eq:tnvgbrmnTzh}.

Next, we define an integer-valued function with domain $\set{4,5,6,7,\dots}$ as follows.
\begin{equation}
\parbox{9.5cm}{With the initial value $\gpn(4):=17/2$,  $\gpn(k)$ for $k\geq 5$ is given by the recursive formula
$\gpn(k):=2 \gpn(k-1)+5\cdot 2^{k-5}$.}
\label{eq:gpndF}
\end{equation}
The ``pentagon'' lattice $N_5$ is drawn in Figure~\ref{fig:M3N5}. The subscript of $\gpn$ comes from  ``\raisebox{.3ex}{P}e\raisebox{.3ex}{N}tagon''; this is motivated by the following claim, in which $k$ denotes an integer. 
\begin{equation}\left.
\parbox{10.5cm}{If a $k$-element lattice $K$ is of the form $K=C'\gplu N_5\gplu C''$ with chains $C'$ and $C''$, then $\CE K=\gpn(k)$ and $|\Con K|=5\cdot 2^{k-5}$.}\,\,\right\}
\label{eq:hmgYrndlt}
\end{equation}
We prove this by induction on $k$. If $k=5$, then $K\cong N_5$ 
and  Lemma~\ref{lemma:aboutcon} 
yields that $|\Con {N_5}|=5=5\cdot 2^{5-5}$ and  $\CE{N_5}=22=\gpn(5)$. Hence, \eqref{eq:hmgYrndlt} holds for $k=5$. So assume that $k>5$ and \eqref{eq:hmgYrndlt} holds for $k-1$. Since  $|C'|>1$ or $|C''|>1$, duality allows us to assume that $|C'|>1$. Then  $K$ has a unique atom $b$. By parts (2) and (3) of Lemma~\ref{lemma:aboutcon},
$\beta:=\equ(0,b)\in\At{\Con K}$, and  $[b,1]$ is the only non-singleton block of $\gamma:=\con(b,1)$. For $K^\dag:=K/\beta$, \eqref{eq:heqnb} gives that $|K^\dag|=|K|-\heq(\beta)=k-1$ and, in addition, $K^\dag =C_\dag'\gplu N_5^\dag\gplu C_\dag''$ where $C_\dag'$ and $C_\dag''$ are chains. 
Since $\gamma$ is a complement of $\beta$ and $\heq(\beta)=1$,  \eqref{eq:tnvgbrmnTzh} and the induction hypothesis imply that 
$\CE K=2\cdot\CE{K^\dag}+2|\Con {K^\dag}|=2\gpn(k-1)+2\cdot 5\cdot 2^{k-1-5}=2\gpn(k-1)+ 5\cdot 2^{k-5}=\gpn(k)$, as required. Furthermore, since $\fabe$ is bijective by Lemma~\ref{lemma:dlatFlt}(ii) and $|\filter_{\Con K}\,\beta|=|\Con {K^\dag}|=5\cdot 2^{k-6}$ by the Correspondence Theorem and the induction hypothesis, we have that 
$|\Con K|=2\cdot |\Con K^\dag|=5\cdot 2^{k-5}$. This completes the induction step and the proof of \eqref{eq:hmgYrndlt}.

If $\alpha\in\At{\Con A}$ is fixed and so no ambiguity threatens,  we let
\begin{equation}
m:=\heq(\alpha)=\En\alpha/2;\text{  note that }|A/\alpha|=n-m.
\label{eq:hQpsnTnMm}
\end{equation}
Equalities obtained by straightforward computations will be denoted by $\eqflat$ signs.
\begin{align}
\text{Let }\quad w(x)&:=\gmx(n)- \Bigl(2\cdot\gmx(n-x)+(2\cdot 2 x -2)\cdot 2^{n-x-1}\Bigr) \label{eq:f1aUxf} \\
&\phantom{:}\eqflat  2^{n-x} \cdot \bigl((n-1)\cdot 2^{x-1}-n-x+2\Bigr).
\label{eq:scpfSrthnd}
\end{align}
Keeping  $n\geq 3$ in mind, we claim that this auxiliary function has the  property that
\begin{equation}
\text{for  $1\leq x\leq n-2$, $\,\,w(x)\geq 0$ and $w(x)=0\iff x=1$.}
\label{eq:sMjLCkdsps}
\end{equation}
Let $w_2(x)$ denote the second factor of  \eqref{eq:scpfSrthnd}. It suffices to show that \eqref{eq:sMjLCkdsps} holds for $w_2(x)$ instead of $w(x)$. 
We denote $\frac d{dx} w_2(x)$ by $w_2'(x)$. 
Since $w_2(1)=0$ and $w_2'(x)
=\bigl((n-1)\cdot 2^{x-1}-n-x+2\Bigr){}'
\eqflat
(n-1)\cdot 2^{x-1}\cdot \ln 2-1 \geq 2\cdot \ln 2 \cdot 1-1=\ln 4-1>0$ implies that $w_2(x)$ is strictly increasing in the interval $[1,\infty)$, we conclude \eqref{eq:sMjLCkdsps}.

If $m=\heq(\alpha)=n-1$, then $A$ is a simple algebra and part (a) as well as parts (b) and (c) of the theorem are trivial. Hence, we can always assume that $m\leq n-2$. 
By the induction hypothesis, \eqref{eq:hQpsnTnMm}, and Lemma~\ref{lemma:manycon}, 
\begin{equation}
\CE{A/\alpha}\leq \gmx(n-m)\quad\text{ and }\quad |\Con{A/\alpha}|\leq 2^{n-m-1}.
\label{eq:mspmbRknfn}
\end{equation} 
Hence,  letting $m=\heq(\alpha)=\En\alpha/2$ play the role of $x$, we have that 
\begin{equation}
\CE A
\overset{\eqref{eq:lkhmsvlTknL}, \eqref{eq:mspmbRknfn}}\leq 
2\cdot\gmx(n-m)+(2\cdot 2 m -2)\cdot 2^{n-m-1} 
\overset{\eqref{eq:f1aUxf},\eqref{eq:sMjLCkdsps}}\leq \gmx(n),
\label{eq:zgGrmbRkszFlg}
\end{equation}
proving that $\CE A\leq \gmx(n)$, as required. Next, assume that $\CE A=\gmx(n)$. Then both inequalities in \eqref{eq:zgGrmbRkszFlg} are equalities, whereby the same holds for the inequalities in \eqref{eq:lkhmsvlTknL} and  
\eqref{eq:mspmbRknfn}, and $\heq(\alpha)=m=x=1$ by \eqref{eq:sMjLCkdsps}.
Note that \eqref{eq:ckfpCtrkT} also gives that $\heq(\alpha)=1$ and,  furthermore, it gives that $\falf$ is bijective. 
Since it is irrelevant how the atom $\alpha\in\Con A$ was fixed, 
\begin{equation}
\parbox{9cm}{for every  $\alpha\in\At{\Con A}$, $\falf$ is bijective and $\heq(\alpha)=1$.}
\label{eq:nzjtnmkcClt}
\end{equation}
Thus, Lemma~\ref{lemma:dlatFlt}(iii) implies that $\Con A$ is a boolean lattice. To show that this boolean lattice is of size $2^{n-1}$, we consider $\alpha$ fixed again. 
We have already mentioned that the inequalities in \eqref{eq:mspmbRknfn} are equalities, whence 
\eqref{eq:tlFlfTrtnz},  \eqref{eq:mspmbRknfn}, and the equality in \eqref{eq:nzjtnmkcClt} give that $\CA(A,\alpha)=2^{n-\heq(\alpha)-1}=2^{n-2}$. Thus, using that  $\falf$ is bijective, we obtain that 
$\Con A=2\cdot \CA(A,\alpha)=2^{n-1}$. Therefore, $\Con A$ is the $2^{n-1}$-element boolean lattice, and we have proved part (a) of the theorem.

Next, we turn our attention to part (b). The inequality in it follows from part (a) since lattices are congruence distributive. Let $L:=C_n$, the $n$-element chain, and 
let $u$ be the unique atom of $L$. It follows easily from Lemma~\ref{lemma:aboutcon} that $\alpha:=\equ(0,u)$ is an atom of $\Con{L}$. Hence, the chain $L':=L/\alpha$ is of size $|L'|=n-\heq(\alpha)=n-1$ by \eqref{eq:heqnb}. By Lemma~\ref{lemma:manycon}(b), $|\Con{L'}|=2^{n-2}$. 
Since $\Con L$ is boolean by Lemma~\ref{lemma:manycon} and $\heq(\alpha)=1$, \eqref{eq:tnvgbrmnTzh} gives that $\CE L=2\CE{L'}+2|\Con{L'}|$.  Using these facts and the induction hypothesis, we obtain that $\CE L$ is
\begin{equation}
2\gmx(n-1)+2\cdot 2^{n-2}=
2\bigl((n-2)\cdot 2^{n-2}+ 2^{n-2}\bigr)=\gmx(n),
\label{eq:ktmgltVwzfkrD}
\end{equation}
proving the ``if part'' of part (b).

Next, for later reference, we prove that
\begin{equation}\left.
\parbox{6cm}{if $\delta\in\Con L$ such that $\heq(\delta)=1$ and 
$L/\delta$ is a chain, then $L$ is also a chain.}\,\,\right\}
\label{eq:jvjkrsflGr}
\end{equation}
To prove \eqref{eq:jvjkrsflGr}, observe that  $L/\delta$
  consists of a unique 2-element $\delta$-block  $B=\set{0_B,1_B}$, and the rest of the $\delta$-blocks are singletons. Let $H:=\set{h}$ be a singleton $\delta$-block. Since $L/\delta$ is a chain, $B$ and $H$ are comparable; duality allows us to assume that $B<H$ holds in $L/\delta$. It follows from \eqref{eq:sDkvlDb} that $0_B<1_B\leq 1_H=h$. Hence, $h$ is comparable with the elements of $B$, and it is trivially comparable with  every element that forms a singleton block. Therefore, $L$ is a chain, proving \eqref{eq:jvjkrsflGr}. 

To prove the ``only if'' part of part (b), assume that $L$ is an $n$-element lattice and $\CE L=\gmx(n)$. By part (a) of the theorem, $\Con L$ is the $2^{n-1}$-element boolean lattice; let   $\alpha_1$,\dots, $\alpha_{n-1}$ be its atoms.
They are independent in the semimodular lattice $\Equ L$, whereby it is known, e.g. from Theorem 380 of Gr\"atzer~\cite{gratzerLTFbook}, that 
 $\heq(\alpha_1)+\dots+\heq(\alpha_{n-1})=\heq(\alpha_1\vee\dots\vee \alpha_{n-1})=\heq(\topof L)=n-1$. Hence each of the positive integers $\heq(\alpha_{1})$, \dots, $\heq(\alpha_{n-1})$ equals 1. In particular, letting $\alpha:=\alpha_1$, $\heq(\alpha)=1$. Thus $L':=L/\alpha$ is an $(n-1)$-element lattice by, say, \eqref{eq:heqnb}. By \eqref{eq:tnvgbrmnTzh}, 
\begin{equation}
2\cdot \CE{L'}+2\cdot|\Con{L'}|= \CE L = \gmx(n).
\label{eq:cphjMns}
\end{equation}
However, $\CE{L'}\leq \gmx(n-1)$ by part (a) of the theorem and 
$|\Con{L'}|\leq 2^{n-2}$ by  Lemma~\ref{lemma:manycon}(b). Hence, comparing \eqref{eq:ktmgltVwzfkrD} and \eqref{eq:cphjMns}, we obtain that $\CE{L'}=\gmx(n-1)$.  Thus, the induction hypothesis implies that $L'$ is a chain. By \eqref{eq:jvjkrsflGr}, so is $L$, proving  part (b) of the theorem. 

Next, note that $\gsb(k)$ is not an integer for an integer $k<3$. We claim  that 
\begin{align}
&\text{for }k\geq 3, \text{ }\quad \gsb(k)<\gmx(k),
\label{eq:zknjmszkSds}\\
&\text{for }k\geq 4, \text{ }\quad \gsb(k)\eqflat 2\gsb(k-1)+2^{k-2}, \text{ and}\label{eq:szkRshzTk}\\
&\text{for }k\geq 5, \text{ }\quad \gpn(k)<\gsb(k).
\label{eq:nljzsbrgpKh}
\end{align}
Indeed, \eqref{eq:zknjmszkSds} follows trivially from \eqref{eq:njpdFwmtS} while 
a trivial induction based on \eqref{eq:gpndF},  \eqref{eq:szkRshzTk}, and  $22=\gpn(5)=22<36=\gsb(5)$ and $5\cdot 2^{k-5} < 2^{k-2}$ yields \eqref{eq:nljzsbrgpKh}.

Next, we prove  part (c) of the theorem by induction on $n$. 
If $L$ is an $n$-element lattice such that $\CE L < \gmx(L)$, then part (b) of the theorem implies that $L$ is not a chain, whereby $n\geq 4$. So the base of the induction is $n=4$. 
For $n=4$, if $\CE L<\gmx(n)$, then $L=B_4$, the only 4-element non-chain, and  $\CE L=14=\gsb(4)$, whereby  part (c) of the theorem clearly holds for $n=4$. Thus, from now on,  we assume that $n\geq 5$ and $L$ is an $n$-element lattice such that $\CE L<\gmx(n)$ and part (c) of the theorem holds for all lattices consisting of fewer than $n$ elements.  By part (b), $L$ is not a chain. There are two cases.

\begin{case}\label{case:one} We assume that there is an  $\alpha\in\At{\Con L}$ such that $L':=L/\alpha$ is not a chain. 
For such an atom $\alpha$ and $m:=\heq(\alpha)=\En\alpha/2$, \eqref{eq:hQpsnTnMm} gives that $|L'|=n-m$. Hence $|\Con{L'}|\leq 2^{n-m-2}$ by Lemma~\ref{lemma:manycon}. By the induction hypothesis, $\CE{L'}\leq \gsb(n-m)$.
Thus, \eqref{eq:lkhmsvlTknL} yields that
\begin{equation}
\CE L\leq 2 \gsb(n-m) +(4m-2)\cdot 2^{n-m-2}.
\label{eq:tcpDhwjmG}
\end{equation}
This motivates us to consider the auxiliary function
\begin{equation}
u_n(x):=\gsb(n) - \bigl(2 \gsb(n-x) +(4x-2)\cdot 2^{n-x-2}\bigr),
\label{eq:wtgbszbcsmtV}
\end{equation}
where $x\in\mathbb R$ is a real variable. 
With the usual notation $u_n'(x):=\frac{d}{dx}u_n(x)$,
\begin{align}
u_n(x)&\eqflat (2n-1)\cdot 2^{n-3}-(4n+4x-6)\cdot 2^{n-x-3},
\label{eq:sKmtmrszvnyvgpTc}\\
u_n(1)&\eqflat 0\text{, \ and }\\
u_n'(x)&\eqflat \bigl((2(n+x)-3)\cdot\ln 4 -4\bigr) \cdot 2^{n-x-3}. 
\label{eq:smGzlkZgn}
\end{align}
Since $\ln 4>1$ and $n\geq 5$, for $x\in[1,\infty)$ we have that 
$\bigl(2(n+x)-3\bigr)\cdot\ln 4 -4\geq (2\cdot 6-3)\cdot 1-4=5>0$.
Hence, $u'_n(x)$ is positive and so $u_n(x)$ is strictly increasing in the interval $[1,\infty)$. Thus, for $x\geq 1$,  $u_n(x)\geq 0$ and $u_n(x)= 0\iff x=1$. Therefore, taking \eqref{eq:wtgbszbcsmtV} into account, 
\begin{equation}\left.
\parbox{8.5cm}{$2\cdot\gsb(n-m) +(4m-2)\cdot 2^{n-m-2}\leq \gsb(n)$, and this inequality turns to an equality if and only if $m=1$.}
\,\,\right\}
\label{eq:pXwnwmrnbD}
\end{equation} 
Combining \eqref{eq:tcpDhwjmG} and \eqref{eq:pXwnwmrnbD}, we obtain that $\CE L\leq \gsb(n)$, as required.

Next but still in the scope of Case~\ref{case:one}, assume that 
$\CE L = \gsb(n)$. Then \eqref{eq:tcpDhwjmG} and \eqref{eq:pXwnwmrnbD} give that $m=1$ and the inequality in \eqref{eq:tcpDhwjmG} is an equality. 
Since  \eqref{eq:tcpDhwjmG} was obtained from the inequalities  \eqref{eq:lkhmsvlTknL}, $|\Con{L'}|\leq 2^{n-m-2}$,  and   $\CE{L'}\leq\gsb(n-m)$, these three inequalities are also equalities. In particular, 
$\CE{L'}=\gsb(n-m)=\gsb(n-1)=\gsb(|L'|)$, and the induction hypothesis implies that $L'$ is of the form $L'=C^\ast\gplu B'_4\gplu C^{\ast\ast}$ where $C^\ast$ and $C^{\ast\ast}$ are finite chains and $B'_4$ is isomorphic to $B_4$. 
By Lemma~\ref{lemma:aboutcon}, there are $p,q\in L$ such that $p\prec q$ and $X:=\set{p,q}=[p,q]$ is the only non-singleton block of $\alpha$. Note that $p=0_X$ and $q=1_X$. Denote by $C'$ and $C''$ the sets $\set{y\in L: y/\alpha\in C^{\ast}}$ and $\set{y\in L: y/\alpha\in C^{\ast\ast}}$, respectively. 
Observe that $C'$ and $C''$ are chains. Indeed, if $x,y\in C'$, then 
either both $x/\alpha$ and $y/\alpha$ are singletons and their comparability in $C^\ast$ gives that $x\nparallel y$, or one of them is a singleton, the other one is $X=\set{p,q}$, and  \eqref{eq:sDkvlDb} yields that $x\nparallel y$.
Since  $C'$ and $C''$ are chains, we can assume that $X\in B'_4$ since otherwise $L=C'\gplu B_4\gplu C''$ is clear.
If $X$ is the bottom element of $B'_4$, then $B'_4$ is of the form $B'_4=\set{X,a/\alpha=\set a,b/\alpha=\set b,v/\alpha=\set v}$ with top element $\set v$, \eqref{eq:sDkvlDa} gives that $a\wedge b=1_X=q$, and we conclude  that  $\set{q,a,b,v}$ is sublattice of $L$, this sublattice is isomorphic to $B_4$, and  $L=C'\gplu B_4\gplu C''$ again, as required. 
By duality, $L$ is also of the required form $C'\gplu B_4\gplu C''$  if $X$ is the largest element of $B'_4$. 
We are left with the possibility that  
\begin{equation}
\text{$X\in B'_4$ is neither the bottom, nor the top of $B'_4$.}\label{eq:nzgtrmPnS}
\end{equation} 
Then $B'_4=\set{ \set{u}, \set{a}, X, \set{v}}$ such that $\set {u}$ and $\set{v}$ are the smallest element and the largest element of $B'_4$, respectively. Using \eqref{eq:sDkvlDa}, we have that $a\vee p=v$ and $a\wedge q=u$. Hence, $\set{u,a,p,q,v}$ is (isomorphic to) $N_5$; see Figure~\ref{fig:M3N5}. Using that $C'$ and $C''$ are chains, it follows that 
$L$ is of the form $L=C'\gplu N_5\gplu C''$. 
Hence, \eqref{eq:hmgYrndlt} and  \eqref{eq:nljzsbrgpKh}  yield that $\CE L=\gpn(n)<\gsb(n)$, contradicting our assumption. This excludes \eqref{eq:nzgtrmPnS} and completes Case~\ref{case:one} by having proved that 
\begin{equation}\left.
\parbox{10.5cm}{if $\CE L<\gmx(n)$ and $L/\alpha$ is not a chain for some $\alpha\in\At{\Con L}$, then $\CE L\leq\gsb(n)$ and, furthermore, $\CE L=\gsb(n)$ implies that $L=C'\gplu B_4\gplu C''$ for some chains $C'$ and $C''$.}
\,\,\right\} 
\label{eq:nvzTvKsnrP}
\end{equation}
\end{case}

\begin{case}\label{case:two} We assume that for every atom $\alpha\in\Con L$,  $L/\alpha$ is  a chain. 
Let $\alpha$ denote a fixed atom of $\Con L$. Similarly to the first part of Case~\ref{case:one} concluding with \eqref{eq:tcpDhwjmG} and using the same notation, $|L'|=n-m$, $|\Con{L'}| = 2^{n-m-1}$ by Lemma~\ref{lemma:manycon}(b), and $\CE{L'}\leq \gsb(n-m)$ by the induction hypothesis. Thus,  \eqref{eq:lkhmsvlTknL} yields that
\begin{equation}
\CE L\leq 2 \gsb(n-m) +(4m-2)\cdot 2^{n-m-1}.
\label{eq:nmgrMtsVztK}
\end{equation}
Since $L'$ is a chain but $L$ is not, \eqref{eq:jvjkrsflGr} implies that $m=\heq(\alpha)\geq 2$. Let
\begin{equation}
v_n(x):=\gsb(n) - \bigl(2 \gsb(n-x) +(4x-2)\cdot 2^{n-x-1}\bigr).
\label{eq:jbhdsmGnT}
\end{equation}
With this auxiliary real function, computation shows that
\begin{align}
v_n(x)&\eqflat 
(2n-1)\cdot 2^{n-3} -  (4n+12x-10)\cdot 2^{n-x-3},\cr
v_n(2)&\eqflat (2n-9)\cdot 2^{n-4} >  0,\,\,\text{ since $n\geq 5$, and}  
\label{eq:plTknktCrvNn}\\
v'_n(x)&\eqflat  ((4n+12x-10)\cdot \ln 2-12) \cdot  2^{n-x-3}.
\label{eq:tlrsKkrtktfld}
\end{align}
Since $n\geq 5$ and $x=m\geq 2$, we have that
$(4n+12x-10)\cdot \ln 2-12
\geq  34\cdot\ln 2 -12 =  17\cdot\ln 4 -12\geq 17-12>0.
$
Hence, $v'_n(x)>0$ and $v_n(x)$ is strictly increasing in 
$[2,\infty)$. This fact, $m\geq 2$, and \eqref{eq:plTknktCrvNn}
yield that $v_n(m)>0$. 
Therefore,  \eqref{eq:jbhdsmGnT} gives that 
$2\gsb(n-m) +(4m-2)\cdot 2^{n-m-1} < \gsb(n)$, whereby 
\eqref{eq:nmgrMtsVztK} implies that 
\begin{equation}\left.
\parbox{7.3cm}{if $\CE L<\gmx(n)$ and $L/\alpha$ is a chain for each $\alpha\in\At{\Con L}$, then $\CE L<\gsb(n)$,}
\,\,\right\}
\label{eq:whtwSczcpf}
\end{equation} 
completing the argument in Case~\ref{case:two}.
\end{case}

Next, we are going to prove by induction on $k=|K|$ that
\begin{equation}\left.
\parbox{8.3cm}{if $K$ is a $k$-element lattice of the form $C'\gplu B_4\gplu C''$ with chains $C'$ and $C''$, then $\CE K=\gsb(k)$.}\,\,\right\}
\label{pbx:whkjlCmgNzt}
\end{equation}
The smallest possible value of $k$ is 4, for which Lemma~\ref{lemma:aboutcon} yields easily that $\CE K=\CE{B_4}=14=\gsb(4)$. So let $k>4$. Duality allows us to assume that $|C'|\geq 2$ and $K$ has a unique atom $b$. 
Like in the argument proving \eqref{eq:hmgYrndlt}, 
$\gamma:=\con(b,1)$ is a complement of $\beta:=\equ(0,b)=\con(0,b)\in\At{\Con K}$ and $K/\beta$ is also of the form mentioned in \eqref{pbx:whkjlCmgNzt}. By Lemma~\ref{lemma:manycon}(c), $|\Con {K/\beta}|=2^{k-1-2}$. Thus, \eqref{eq:heqnb},
 \eqref{eq:tnvgbrmnTzh},  the induction hypothesis, and \eqref{eq:szkRshzTk} give that
\[
\CE K=2\CE{K/\beta} + 2|\Con{K/\beta}|=2\gsb(k-1)+2\cdot2^{k-1-2} =\gsb(K),
\]
proving \eqref{pbx:whkjlCmgNzt}.
Finally, \eqref{eq:nvzTvKsnrP}, \eqref{eq:whtwSczcpf}, and 
\eqref{pbx:whkjlCmgNzt} imply part (c) of the theorem. 
\end{proof}

\end{document}